\providecommand{\U}[1]{\protect\rule{.1in}{.1in}}
\newtheorem{theorem}{Theorem}[section]
\begin{document}
\title[A note on the polynomial Bohnenblust--Hille inequality]{A note on the polynomial Bohnenblust--Hille inequality}
\author[D. Nu\~{n}ez-Alarc\'{o}n]{D. Nu\~{n}ez-Alarc\'{o}n\textsuperscript{*}}
\address{Departamento de Matem\'{a}tica,\newline\indent Universidade Federal da Para\'{\i}ba,\newline\indent 58.051-900 - Jo\~{a}o Pessoa, Brazil.}
\email{danielnunezal@gmail.com}
\thanks{\textit{2010 Mathematics Subject Classification.} 46G25, 12Y99}
\thanks{\textsuperscript{*}Supported by Capes.}
\keywords{complex polynomials, multilinear operators, Bohnenblust--Hille inequality}

\begin{abstract}
Recently, in paper published in the \textit{Annals of Mathematics}, it was
shown that the Bohnenblust--Hille inequality for (complex) homogeneous
polynomials is hypercontractive. However, and to the best of our knowledge,
there is no result providing (nontrivial) lower bounds for the optimal
constants for $n$-homogeneous polynomials ($n>2$). In this short note we
provide lower bounds for these famous constants.

\end{abstract}
\maketitle

\section{Introduction}

The Bohnenblust--Hille inequality for complex homogeneous polynomials
(\cite{bh}, 1931) asserts that there is a function $D:\mathbb{N}
\rightarrow\lbrack1,\infty)$ such that for every $m$-homogeneous polynomial
$P$ on $\mathbb{C}^{N}$, the $\ell_{\frac{2m}{m+1}}$-norm of the set of
coefficients of $P$ is bounded above by $D(m)$ times the supremum norm of $P$
on the unit polydisc. In(\cite{annals}, 2011) it was proved that
\begin{equation}
D(m)\leq\left(  1+\frac{1}{m}\right)  ^{m-1}\sqrt{m}\left(  \sqrt{2}\right)
^{m-1} \label{defant55}%
\end{equation}
which yields the hypercontractivity of the inequality.

The last few years experienced the rising of several works dedicated to
estimating the Bohnenblust--Hille constants (\cite{annals,jfa,diniz2,munn,
jmaa}) and also unexpected connections with Quantum Information Theory
appeared (see, e.g., \cite{montanaro}). There are in fact four cases to be
investigated: polynomial (real and complex cases) and multilinear (real and
complex cases). We can summarize in a sentence the main information from the
recent preprints: the Bohnenblust--Hille constants are, in general,
extraordinarily smaller than the first estimates predicted.

For example, now it is known that the Bohnenblust--Hille constants for the
multilinear case behave in a subpolynomial way. In view of this, the
investigation of lower bounds for the Bohnenblust--Hille constants seems to be
an important task. The existing results for multilinear mappings and real
scalars are highly nontrivial. For instance, in \cite{diniz2} it is shown that
for multilinear mappings and real scalars one has that
\[
C(m)\geq2^{1-\frac{1}{m}}%
\]
and it is still open whether these estimates are sharp or not. Thus, the
possibility of the boundedness of the multilinear Bohnenblust--Hille constants
is open. In this short note we shall show that, if $m\geq2$, then
\[
D(m)\geq\frac{\left(  2+2^{m}\right)  ^{\frac{m+1}{2m}}}{\sqrt{4+2^{m+1}}}.
\]

\section{The result}

To the best of our knowledge, the only nontrivial lower bound for the
constants of the (complex) polynomial Bohnenblust--Hille inequality is%
\[
D(2)\geq1.1066,
\]
proved in \cite{munn}. The lack of known estimates for the norms of complex
polynomials of higher degrees was a barrier to obtain nontrivial lower
estimates for $D\left(  m\right)  $ with $m>2$. Our result provides nontrivial constants:

\begin{theorem}
For every $m\geq2$,%
\[
D(m)\geq\frac{\left(  2+2^{m}\right)  ^{\frac{m+1}{2m}}}{\sqrt{4+2^{m+1}}}>1.
\]
\bigskip
\end{theorem}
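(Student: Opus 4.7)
The plan is to prove the bound by exhibiting an explicit $m$-homogeneous polynomial in two variables and using the elementary fact that $D(m)\geq \|(a_\alpha)\|_{2m/(m+1)}/\|P\|_\infty$ for any $P(z)=\sum a_\alpha z^\alpha$. A useful hint comes from rewriting the target as $(2+2^m)^{1/(2m)}/\sqrt{2}$: the numerator looks like the $\ell_{2m/(m+1)}$-norm of a coefficient vector with three entries of moduli $1,1,2^{(m+1)/2}$, while the denominator $\sqrt{4+2^{m+1}}=\sqrt{2^2+(2^{(m+1)/2})^2}$ suggests that at the supremum the two unit coefficients should combine into a vector of modulus $2$ perpendicular to the big coefficient. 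Guided by this I would test the one-parameter family
\[
P_{m,c}(z_1,z_2) \;=\; z_1^{m-2}\bigl(z_1^2-z_2^2+c\,z_1z_2\bigr),\qquad c>0,
\]
which is $m$-homogeneous and has exactly the three nonzero coefficients $1$, $-1$, $c$, independent of $m$. The key design feature is the factorization into $z_1^{m-2}$ and a $2$-homogeneous piece, which reduces everything to a low-degree computation.

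Computing the two ingredients is then routine. The $\ell_{2m/(m+1)}$-norm of the coefficients of $P_{m,c}$ is $(2+c^{2m/(m+1)})^{(m+1)/(2m)}$. For the supremum on the closed bidisc, the factor $z_1^{m-2}$ contributes at most $1$, so $\|P_{m,c}\|_\infty\le \|Q_c\|_\infty$, where $Q_c(z_1,z_2)=z_1^2-z_2^2+cz_1z_2$; on the torus, multiplying by $e^{-i(\theta_1+\theta_2)}$ gives
\[
\bigl|Q_c(e^{i\theta_1},e^{i\theta_2})\bigr|^2 \;=\; \bigl|2i\sin(\theta_1-\theta_2)+c\bigr|^2 \;=\; c^2+4\sin^2(\theta_1-\theta_2),
\]
so $\|Q_c\|_\infty=\sqrt{c^2+4}$, attained when $\theta_1-\theta_2=\pm\pi/2$. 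Since this attainment is compatible with $|z_1|=1$ (explicitly, at $z_1=i$, $z_2=1$ one checks $|P_{m,c}|=\sqrt{c^2+4}$), the upper bound is sharp and $\|P_{m,c}\|_\infty=\sqrt{c^2+4}$.

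Finally I would optimize the ratio $(2+c^{2m/(m+1)})^{(m+1)/(2m)}/\sqrt{c^2+4}$ over $c>0$. Logarithmic differentiation reduces the stationarity condition to the clean equation $c^{2-2m/(m+1)}=2$, whose solution is $c=2^{(m+1)/2}$; substituting then gives $c^{2m/(m+1)}=2^m$ and $c^2=2^{m+1}$, producing the claimed bound $(2+2^m)^{(m+1)/(2m)}/\sqrt{4+2^{m+1}}$. The strict inequality $>1$ reduces upon squaring to $(2+2^m)^{1/m}>2$, which is immediate since $2^m+2>2^m$. The only nonroutine step in the whole argument is guessing the family $P_{m,c}$; for $m=2$ this recovers the polynomial underlying the bound $D(2)\geq 1.1066$ from \cite{munn}, and the factorization $P_{m,c}=z_1^{m-2}Q_c$ is exactly what lifts that $2$-homogeneous extremizer to arbitrary degree $m$.
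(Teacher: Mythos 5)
Your proof is correct and follows essentially the same route as the paper: a two-variable $2$-homogeneous core $z_1^2-z_2^2+cz_1z_2$ lifted to degree $m$ by a monomial of supremum norm $1$ on the polydisc, followed by the identical optimization in $c$ yielding $c=2^{(m+1)/2}$. The only differences are cosmetic: you multiply by $z_1^{m-2}$ rather than by $z_3\cdots z_m$ as the paper does, and you verify the norm $\sqrt{c^2+4}$ directly on the torus instead of quoting the formula from \cite{munn}.
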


\begin{proof}
Let $P_{2}:\ell_{\infty}^{2}\rightarrow\mathbb{C}$ be a $2$-homogeneous
polynomial given by
\[
P_{2}(z_{1},z_{2})=az_{1}^{2}+bz_{2}^{2}+cz_{1}z_{2}.
\]
with $a,b,c\in\mathbb{R}$ and  $ab<0$ and $|c(a+b)|\leq4|ab|.$ From
\cite{munn} we know that
\[
\Vert P_{2}\Vert=\left(  |a|+|b|\right)  \sqrt{1+\frac{c^{2}}{4|ab|}.}%
\]
For each $m$, consider
\[
P_{m}(z)=z_{3}\ldots z_{m}P_{2}(z_{1},z_{2}).
\]

By choosing $a=-b=1$, the $\ell_{\frac{2m}{m+1}}$ norm of the coefficients of
$P_{m}$ is
\[
\left(  \sqrt[m+1]{a^{2m}}+\sqrt[m+1]{b^{2m}}+\sqrt[m+1]{c^{2m}}\right)
^{\frac{m+1}{2m}}=\left(  2+\sqrt[m+1]{c^{2m}}\right)  ^{\frac{m+1}{2m}}%
\]
and
\[
\Vert P_{m}\Vert=\Vert P_{2}\Vert=\left(  \left\vert a\right\vert +\left\vert
b\right\vert \right)  \sqrt{1+\frac{c^{2}}{4\left\vert ab\right\vert }}%
=\sqrt{4+c^{2}}.
\]
Thus, we obtain
\[
D(m)\geq f_{m}(x)
\]
for all real numbers $x$, with
\[
f_{m}(x)=\left(  2+\sqrt[m+1]{x^{2m}}\right)  ^{\frac{\left(  m+1\right)
}{2m}}.\left(  \sqrt{4+x^{2}}\right)  ^{-1}.
\]
By solving $f_{m}^{\prime}(x)=0$ we conclude that $x=2^{\frac{m+1}{2}}$is a
point where the maximum is achieved. Thus
\[
D(m)\geq f_{m}(2^{\frac{m+1}{2}})=\left(  2+2^{m}\right)  ^{\frac{m+1}{2m}%
}.\left(  \sqrt{4+2^{m+1}}\right)  ^{-1}.
\]
A straightforward calculation shows that the last expression is always
strictly greater than $1$.
\end{proof}


\begin{thebibliography}{9}                                                                                                %
\bibitem {bh}H.F. Bohnenblust and E. Hille, On the absolute convergence of
Dirichlet series, Ann. of Math. (2) \textbf{32} (1931), 600--622.

\bibitem {annals}A. Defant, L. Frerick, J. Ortega-Cerd\'{a}, M. Ouna\"{\i}es
and K. Seip, The polynomial Bohnenblust--Hille inequality is hypercontractive,
Ann. of Math. (2) \textbf{174} (2011), 485--497.

\bibitem {jfa}D. Diniz, G.A. Mu\~{n}oz-Fern\'{a}ndez, D. Pellegrino and J.B.
Seoane-Sep\'{u}lveda, The asymptotic growth of the constants in the
Bohnenblust--Hille inequality is optimal, J. Funct. Anal. \textbf{263} (2012), 415--428.

\bibitem {diniz2}D. Diniz, G.A. Mu\~{n}oz-Fern\'{a}ndez, D. Pellegrino and
J.B. Seoane-Sep\'{u}lveda, Lower bounds for the constants in the
Bohnenblust--Hille inequality: the case of real scalars, Proc. Amer. Math.
Soc., in press.

\bibitem {montanaro}A. Montanaro, Some applications of hypercontractive
inequalities in quantum information theory, arXiv:1208.0161v2.

\bibitem {munn}G.A. Mu\~{n}oz-Fern\'{a}ndez, D. Pellegrino, J. Ramos Campos
and J.B. Seoane-Sep\'{u}lveda, A geometric technique to generate lower
estimates for the constants in the Bohnenblust--Hille inequalities, arXiv:1203.0793.

\bibitem {jmaa}D. Pellegrino and J.B. Seoane-Sep\'{u}lveda, New upper bounds
for the constants in the Bohnenblust Hille inequality, J. Math. Anal. Appl.
\textbf{386} (2012), 300--307.
\end{thebibliography}
\end{document}